\documentclass[12pt,a4paper]{amsart} 

\usepackage{latexsym}
\usepackage{amsmath}
\usepackage{amsthm}
\usepackage{latexsym}
\usepackage{amssymb}
\usepackage[a4paper , lmargin = {2cm} , rmargin = {2cm} , tmargin = {2.5cm} , bmargin = {2.5cm} ]{geometry}

\usepackage{hyperref}

\DeclareMathOperator{\lk}{lk}
\DeclareMathOperator{\Nor}{Nor}
\DeclareMathOperator{\Pc}{Pc}
\DeclareMathOperator{\GL}{GL}
\DeclareMathOperator{\Out}{Out}
\DeclareMathOperator{\Mod}{Mod}
\DeclareMathOperator{\Isom}{Isom}
\DeclareMathOperator{\card}{card}

\renewcommand{\phi}{\varphi}
\renewcommand{\emptyset}{\varnothing}

\newcommand{\F}{{\rm F}}             
\newcommand{\CAT}{{\rm CAT$(0)$}}

\newcommand{\G}{\mathcal G}

\makeatletter
\def\theenumi{\@roman\c@enumi}
\makeatother

\theoremstyle{plain}
\newtheorem*{NewPropositionA}{Proposition A}	
\newtheorem*{NewTheoremB}{Theorem B}
\newtheorem*{NewPropositionC}{Proposition C}	
\newtheorem*{NewTheoremD}{Theorem D}
\newtheorem*{NewCorollaryE}{Corollary E}

\newtheorem{lemma}[subsection]{Lemma}

\newtheorem{proposition}[subsection]{Proposition}
\theoremstyle{definition}
\newtheorem{definition}[subsection]{Definition}

\title[]{Abstract homomorphisms from locally compact groups to discrete groups}
\author{Linus Kramer and Olga Varghese}

\address{Linus Kramer\\
Department of Mathematics\\
M\"unster University\\ 
Einsteinstra\ss e 62\\
48149 M\"unster (Germany)}
\email{linus.kramer@uni-muenster.de}

\address{Olga Varghese\\
Department of Mathematics\\
M\"unster University\\ 
Einsteinstra\ss e 62\\
48149 M\"unster (Germany)}
\email{olga.varghese@uni-muenster.de}

\begin{document}
\begin{abstract}
We show that every abstract homomorphism $\varphi$ from a locally compact group $L$ to a graph product $G_\Gamma$,
endowed with the discrete topology, is either continuous or $\varphi(L)$ lies in a 'small' parabolic subgroup. 
In particular, every locally compact group topology on a graph product whose graph is not 'small' is discrete.
This extends earlier work by Morris-Nickolas.

We also show the following. If $L$ is a locally compact group and if $G$ is a discrete group which contains no infinite 
torsion group
and no infinitely generated abelian group, then every  abstract homomorphism $\varphi:L\to G$ is 
either continuous, or $\varphi(L)$ is contained in the normalizer of a finite nontrivial subgroup of $G$.
As an application we obtain results concerning the continuity of homomorphisms from locally compact groups to Artin and Coxeter groups.
\end{abstract}

\thanks{{Funded by the Deutsche 
Forschungsgemeinschaft (DFG, German Research Foundation) under Germany's 
Excellence Strategy EXC 2044--390685587, Mathematics M\"unster: Dynamics-Geometry-Structure.}}

\maketitle

\section{Introduction}
We investigate the following type of question.
\begin{quote}
{\em Let $L$ be a  locally compact group and let $G$ be a discrete group.
Under what conditions on the group $G$ is an abstract (i.e. not necessarily continuous) homomorphism 
$\varphi:L\to G$ automatically continuous?}
\end{quote}
There are many results in this direction in the literature, see \cite{Kramer}, \cite{Conner}, \cite{Dudley}, \cite{Morris} or \cite{Paolini}. 
In particular, Dudley \cite{Dudley} proved that every abstract homomorphism from a locally compact group to a free group
is automatically continuous.
This was generalized by Morris and Nickolas \cite{Morris}. They proved that every abstract homomorphism from a locally compact group to 
a free product of groups is either continuous, or the image of the homomorphism is conjugate to a subgroup of one of the 
factors of the free product.

Our first aim is to prove similar results for the case where the codomain $G$ of an abstract homomorphism 
$L\to G$ is a graph product of arbitrary groups.  
Given a simplicial graph $\Gamma=(V, E)$ and a collection of groups $\mathcal{G} = \{ G_u \mid u \in V\}$, the \emph{graph product} $G_\Gamma$ is defined as the quotient
\[ \left( \underset{u \in V}{\ast} G_u \right) / \langle \langle [G_v,G_w]\text{ for }\{v,w\}\in E \rangle \rangle.
\]
We call $G_\Gamma$ \emph{finite dimensional} if there exists a uniform bound on the sizes of cliques in $\Gamma$.

Throughout, $L$ denotes a Hausdorff locally compact group with identity component $L^\circ$, and $G$ denotes a discrete group.
We call $L$ \emph{almost connected} if the totally disconnected group $L/L^\circ$ is compact.
By an \emph{abstract homomorphism} we mean a group homomorphism between topological groups which is not assumed to be continuous.
We remark that every abstract homomorphism whose codomain is discrete is open.

\begin{NewPropositionA}
Let $\varphi:L\to G_\Gamma$ be an abstract homomorphism from an almost connected locally compact group $L$ to a finite dimensional graph product $G_\Gamma$. Then $\varphi(L)$ lies in a complete parabolic subgroup of $G_\Gamma$.
\end{NewPropositionA}

Using Proposition A, we show the following more general result.
\begin{NewTheoremB}
Let $\varphi$ be an abstract homomorphism from a locally compact group $L$ to a finite dimensional graph product $G_\Gamma$. 
Then either $\varphi$ is continuous, or $\varphi(L)$ lies in a 
conjugate of a parabolic subgroup $G_{S\cup\lk(S)}$, where $S\neq\emptyset$ is a clique.
If every composite $L\xrightarrow{\varphi}G_\Gamma\xrightarrow{r_v} G_v$ is continuous, then $\varphi$ is 
continuous.
\end{NewTheoremB}
In particular, every locally compact group topology on a finite dimensional graph product $G_\Gamma$ is discrete, unless
$\Gamma$ is contained in the link of a clique.
In the latter case, $G_\Gamma$ is a direct product of vertex groups and a smaller graph product, and then
a locally compact topology on $G_\Gamma$ may indeed be nondiscrete.

\smallskip
Our remaining results deal with a certain class $\G$ of discrete groups.
Let $\G$ denote the class of all groups $G$ with the following two properties:
\begin{enumerate}
 \item Every torsion subgroup $T\subseteq G$ is finite, and
 \item Every abelian subgroup $A\subseteq G$ is a (possibly infinite) direct sum of cyclic groups.
\end{enumerate}
The abelian subgroups $A$ in such a group are thus of the form $A=F\times\mathbb Z^{(J)}$,
where $F$ is a finite abelian group and $\mathbb Z^{(J)}$ is free abelian of (possibly infinite) rank $\card(J)$.
We remark that subgroups of free abelian groups are again free abelian \cite[A1.9]{HoMo}.

We study abstract homomorphisms from locally compact groups to groups in this class.
We show in Section 7 that the class $\G$ is huge. 
It is closed under finite products, under coproducts, and under passage to subgroups, see Proposition~\ref{LargeClass}.
For example, every finitely generated hyperbolic group, 
every right-angled Artin group, every
Artin group of finite type and every Coxeter group is in this class, see Propositions \ref{Hyperbolic}, \ref{Artin} and \ref{Coxeter}. 
Furthermore, the groups $\GL_n(\mathbb{Z})$, $\Out(F_n)$ and the mapping class groups $\Mod(S_g)$ of compact orientable surfaces 
of genus $g$ are in this class, see Proposition~\ref{OutFn}. Further examples of groups which are in the class $\G$ are diagram groups, see \cite[Theorem 16]{Guba}. In particular, the Thompson's group $F$ is a diagram group and this group contains a free abelian group which is not finitely generated.

We obtain the following results.
\begin{NewPropositionC}
Let $\varphi$ be an abstract homomorphism from a locally compact group $L$ to a group $G$ in the class $\mathcal{G}$. Then $\varphi$ factors through the canonical projection $\pi:L\to L/L^\circ$. If $L$ is almost connected, 
then $\varphi(L)$ is finite.  
\end{NewPropositionC}

\begin{NewTheoremD}
Let $\varphi$ be an abstract homomorphism from a locally compact group $L$ to a group $G$ in the class $\mathcal{G}$. 
Then either $\varphi$ is continuous, or $\varphi(L)$ lies in the normalizer of a finite non-trivial subgroup of $G$.
\end{NewTheoremD}
The following is an immediate consequence of Theorem D.
\begin{NewCorollaryE}
Every abstract homomorphism from a locally compact group $L$ to a torsion free group $G$ in the class $\mathcal{G}$ is continuous. 
In particular, every abstract homomorphism from a locally compact group to a right-angled Artin group or to an Artin group of finite type is continuous.
\end{NewCorollaryE}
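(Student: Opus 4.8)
The plan is to derive everything directly from Theorem~D. That result gives a dichotomy: either $\varphi$ is continuous, or $\varphi(L)$ is contained in the normalizer $\Nor_G(F)$ of some finite non-trivial subgroup $F\subseteq G$. The entire point of the corollary is that, under a torsion-freeness hypothesis, the second alternative can never occur.

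First I would record the elementary observation that a torsion free group $G$ contains no non-trivial finite subgroup whatsoever: any non-identity element of such a subgroup would have finite order, contradicting torsion freeness. Consequently there is no finite non-trivial $F\subseteq G$ for which $\Nor_G(F)$ could possibly contain $\varphi(L)$, so the second case of Theorem~D is vacuous. Since $G$ lies in $\G$ by assumption, Theorem~D applies and forces $\varphi$ to be continuous. This establishes the first assertion of the corollary, and it is genuinely immediate once the dichotomy of Theorem~D is in hand.

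For the ``in particular'' clause I would invoke the membership statements proved later in the paper. By Proposition~\ref{Artin}, every right-angled Artin group and every Artin group of finite type belongs to the class $\G$, so the hypothesis of Theorem~D is satisfied. It then remains only to check that these groups are torsion free, which reduces the second clause to the first. For right-angled Artin groups this is standard: they act freely on the CAT$(0)$ cube complex covering the Salvetti complex, hence contain no torsion. For Artin groups of finite type, torsion freeness is the classical theorem of Brieskorn--Saito and Deligne. With both facts in place, the first part of the corollary yields continuity in each case.

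I do not anticipate a serious obstacle here, since the statement is flagged as an immediate consequence of Theorem~D; the only substantive inputs are the already-cited membership of these groups in $\G$ and the (classical) torsion-freeness of Artin groups of finite type. If anything needs care, it is simply making explicit that ``torsion free'' rules out \emph{every} finite non-trivial subgroup, not merely the cyclic ones, so that the normalizer alternative is excluded outright rather than merely restricted.
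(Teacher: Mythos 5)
Your proof is correct and follows exactly the route the paper intends: the paper presents Corollary~E as an immediate consequence of Theorem~D, with torsion freeness ruling out the normalizer alternative, and Proposition~\ref{Artin} supplying both torsion freeness and membership in $\G$ for right-angled Artin groups and Artin groups of finite type. Your separate arguments for torsion freeness (Salvetti complex, Brieskorn--Saito) are redundant given Proposition~\ref{Artin} but cite the same classical sources, so there is no substantive difference.
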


Related results on abstract homomorphisms into right-angled Artin groups and into Artin groups of non-exceptional finite type were 
recently proved in \cite{Corson}.

Our proofs depend heavily on a theorem of Iwasawa on the structure of connected locally compact groups and on a theorem of van Dantzig on the existence of compact open subgroups in totally disconnected groups. For the proof of Proposition A we use the structure of the 
right-angled building $X_\Gamma$ associated to a graph product $G_\Gamma$.

\section*{Acknowledgment}
The authors thank the referee for careful reading of the manuscript and many helpful remarks, in particular concerning the infinite dimensional case in Theorem B.

\section{Graph products}\label{GraphProductsSection}
In this section we briefly present the main definitions and properties
concerning  graph products. These groups are defined by presentations of a special form. 

A \emph{simplicial graph} $\Gamma=(V,E)$ consists of a set $V$ of \emph{vertices}
and a set $E$ of $2$-element subsets
of $V$ which are called \emph{edges}. We allow infinite graphs.
Given a subset $S\subseteq V$, the \emph{graph generated by $S$} is the graph with vertex set $S$
and edge set $E|_S=\{\{v,w\}\in E\mid v,w\in S\}$.
We call $S$ a \emph{clique} if $E|_S=\{\{v,w\}\mid v,w\in S\text{ with }v\neq w\}=\binom{S}{2}$.
We count the empty set as a clique.
We say that $\Gamma$ has \emph{finite dimension} if there is a uniform upper bound on the cardinality of
cliques in $\Gamma$.
For a subset $S\subseteq V$ we define its \emph{link} as
\[
\lk(S)=\{w\in V\mid \{v,w\}\in E\text{ for all }v\in S\}.
 \]

\begin{definition}
Let $\Gamma$ be a simplicial graph, as defined above. Suppose that for every vertex $v\in V$ we are given a 
nontrivial\footnote{We need nontrivial vertex groups in order to obtain a building. Alternatively, one may remove all
vertices $v$ from $\Gamma$ whose vertex group $G_v$ is trivial, without changing the resulting graph product.}
abstract group $G_v$.
The \emph{graph product} $G_\Gamma$ is the group obtained from the free product of
the $G_v$, for $v\in V$,  by adding the commutator relations $gh=hg$ for all $g\in G_v$, $h\in G_w$ with $\left\{v,w\right\}\in E$,
i.e.

\[ \left( \underset{u \in V}{\ast} G_u \right) / \langle \langle [G_v,G_w]\text{ for }\{v,w\}\in E \rangle \rangle.
\]
\end{definition}
Graph products are special instances of graphs of groups, and in particular colimits in the category of groups
\cite[\S5]{Davis}.
We call the graph product \emph{finite dimensional} if $\Gamma$ has finite dimension as defined above, i.e. if
there is an upper bound on the size of cliques in $\Gamma$.

The first examples to consider are the extremes. If $E=\emptyset$, then $G_{\Gamma}$ is the free product of the 
groups $G_v$, for $v\in V$. On the other hand, if $E=\binom{V}{2}$ is the set of all $2$-element subsets of $V$,
then $G_\Gamma$ is the direct sum of the $G_v$, for $v\in V$. 
So graph products interpolate between free products and direct sums of groups.

\subsection{Parabolic subgroups}\label{ParabolicSubgroups}
Let $\Gamma=(V,E)$ be a simplicial graph, let $G_\Gamma$ denote the graph product of a family of groups $\{G_v\mid v\in V\}$
and let $S$ be a subset of $V$. The subgroup $G_S$ of $G_\Gamma$ generated by the $G_v$, for $v\in S$,
is again a graph product,  corresponding to the subgraph $\Gamma'=(S,E|_S)$.
This follows from the Normal Form Theorem \cite[Thm.~3.9]{Green}. 
There is also a retraction homomorphism
\[
 r_S:G_\Gamma\to G_S
\]
which is obtained by substituting the trivial group for $G_v$ for all $v\in V-S$
\cite[Section 3]{Antolin}.

If $S\subseteq V$ is a subset (resp. a clique), then 
$G_S$ is called a \emph{special parabolic subgroup} (resp. a \emph{ special complete parabolic subgroup}). 
The conjugates in $G_\Gamma$ of the special (complete) parabolic subgroups are called 
(\emph{complete}) \emph{parabolic subgroups}.
We note that parabolic subgroups behave well. 
For $R,S\subseteq V$ and $a,b\in G_\Gamma$ we have
\[
\tag{1}
aG_{R}a^{-1}\subseteq bG_{S}b^{-1}\Rightarrow R\subseteq S
\]
see \cite[Corollary 3.8]{Antolin}. If $gG_Sg^{-1}\subseteq G_S$, then by \cite[Lemma 3.9]{Antolin}
\[
\tag{2}
gG_Sg^{-1}=G_S.
\]

Let $X$ be a subset of $G_{\Gamma}$. If the set of all parabolic subgroups containing $X$ has a minimal element,
then this minimal parabolic subgroup containing $X$ is unique by the remarks above. In this case,
it is called the \emph{parabolic closure} of $X$ and denoted by $\Pc(X)$.
The parabolic closure always exists if $\Gamma$ is finite or if $X$ is finite \cite[Proposition 3.10]{Antolin}.

Let $H\subseteq G_\Gamma$ be a subgroup. We denote by $\Nor_{G_\Gamma}(H)$ the normalizer of $H$ in $G_\Gamma$. 
For a parabolic subgroup of $G_\Gamma$ there is a good description of the normalizer.
\begin{lemma}\cite[Lemma 3.12 and Proposition 3.13]{Antolin}
\label{normalizer}
\begin{enumerate}
\item Let $H\subseteq G_\Gamma$ be a subgroup. Suppose that the parabolic closure of $H$ in $G_\Gamma$ exists. Then 
$\Nor_{G_\Gamma}(H)\subseteq \Nor_{G_\Gamma}{(\Pc(H))}$.
\item Let $G_S$ be a non-trivial special parabolic subgroup of $G_\Gamma$. Then 
$\Nor_{G_\Gamma}(G_S)=G_{S\cup\lk(S)}$. 
\end{enumerate}
\end{lemma}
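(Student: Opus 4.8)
The plan is to treat the two statements separately, deriving (i) from the good behaviour of parabolic subgroups under conjugation, and (ii) from the Normal Form Theorem together with the support invariant it provides.

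For part (i), the starting point is that conjugation by any $g\in G_\Gamma$ is an automorphism of $G_\Gamma$ carrying parabolic subgroups to parabolic subgroups, so $g\Pc(H)g^{-1}$ is again parabolic. If $g\in\Nor_{G_\Gamma}(H)$, then $g\Pc(H)g^{-1}$ is a parabolic subgroup containing $gHg^{-1}=H$, and since $\Pc(H)$ is the least parabolic subgroup containing $H$, we obtain $\Pc(H)\subseteq g\Pc(H)g^{-1}$. Writing $\Pc(H)=cG_Tc^{-1}$, this inclusion reads $cG_Tc^{-1}\subseteq (gc)G_T(gc)^{-1}$; conjugating by $(gc)^{-1}$ turns it into $dG_Td^{-1}\subseteq G_T$ with $d=c^{-1}g^{-1}c$, and then property (2) upgrades it to the equality $dG_Td^{-1}=G_T$. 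Tracing back, $g\Pc(H)g^{-1}=\Pc(H)$, that is, $g\in\Nor_{G_\Gamma}(\Pc(H))$, which is the claim.

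For part (ii), the inclusion $G_{S\cup\lk(S)}\subseteq\Nor_{G_\Gamma}(G_S)$ is straightforward: each vertex $w\in\lk(S)$ is joined by an edge to every $v\in S$, so $G_w$ commutes elementwise with $G_v$, and hence $G_{\lk(S)}$ centralises $G_S$. Since $G_{S\cup\lk(S)}=\langle G_S,G_{\lk(S)}\rangle$ and $G_S$ normalises itself, the whole subgroup $G_{S\cup\lk(S)}$ normalises $G_S$. Indeed $\Gamma|_{S\cup\lk(S)}$ is the join of $\Gamma|_S$ and $\Gamma|_{\lk(S)}$, so one even has $G_{S\cup\lk(S)}=G_S\times G_{\lk(S)}$.

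The substance lies in the reverse inclusion, and this is the step I expect to be the main obstacle. I would argue by induction on the syllable length $n$ of a reduced expression $g=x_1\cdots x_n$ of an element $g\in\Nor_{G_\Gamma}(G_S)$. If the final syllable $x_n\in G_{v_n}$ has $v_n\in S\cup\lk(S)$, then $x_n$ itself normalises $G_S$ (it lies in $G_S$, or centralises $G_S$ when $v_n\in\lk(S)$), so $gx_n^{-1}$ still normalises $G_S$ and has strictly shorter syllable length; the inductive hypothesis then places $gx_n^{-1}$, and hence $g$, in $G_{S\cup\lk(S)}$. It therefore suffices to exclude the case $v_n\notin S\cup\lk(S)$. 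Here $v_n\notin\lk(S)$ yields a vertex $s\in S$ with $v_n\neq s$ and $\{v_n,s\}\notin E$, so that $G_{v_n}$ and $G_s$ do not commute. Choosing $1\neq h\in G_s$, I would examine the word $x_1\cdots x_n\,h\,x_n^{-1}\cdots x_1^{-1}$ representing $ghg^{-1}$: since $h$ does not commute with $G_{v_n}$, the two syllables $x_n$ and $x_n^{-1}$ cannot be brought adjacent to cancel each other, and by reducedness of $x_1\cdots x_n$ neither of them can be shuffled outward to be absorbed by an earlier (resp. later) syllable of the same vertex group. Consequently $v_n$ lies in the support of $ghg^{-1}$. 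As every element of $G_S$ has support contained in $S$ and $v_n\notin S$, this shows $ghg^{-1}\notin G_S$, contradicting $g\in\Nor_{G_\Gamma}(G_S)$. The delicate point, and the technical heart of the argument, is exactly this bookkeeping of which syllables may be commuted past one another and cancelled; making the assertion ``the syllable $v_n$ survives'' rigorous requires the shuffling and cancellation calculus underlying Green's Normal Form Theorem, and this is where I would spend the real effort.
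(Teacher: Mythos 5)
A preliminary remark: the paper does not prove this lemma at all --- it is imported verbatim from Antolin--Minasyan \cite{Antolin}, so your attempt can only be measured against that source and on its own merits. Your part (i) is correct and is essentially the argument in \cite{Antolin}: for $g\in\Nor_{G_\Gamma}(H)$ the subgroup $g\Pc(H)g^{-1}$ is parabolic and contains $gHg^{-1}=H$, minimality gives $\Pc(H)\subseteq g\Pc(H)g^{-1}$, and property (2) of Section~\ref{ParabolicSubgroups} upgrades the inclusion to equality. (You use that $\Pc(H)$ is the \emph{least} parabolic containing $H$; if one only grants that it is the unique \emph{minimal} one, note instead that conjugation by $g$ permutes the parabolics containing $H$ and preserves inclusion, hence fixes the unique minimal element --- a cosmetic point.) The easy inclusion of part (ii), including $G_{S\cup\lk(S)}=G_S\times G_{\lk(S)}$, is also fine.

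The genuine gap is in Case B of part (ii), exactly where you say the real effort would go, and it is not mere bookkeeping. Your survival argument for the $v_n$-syllable has two ingredients. The first --- $h$ can never be shuffled out from between $x_n$ and $x_n^{-1}$, so these two syllables never cancel against each other --- is solid, because shuffles preserve the relative order of non-commuting syllables. The second --- that by reducedness of $x_1\cdots x_n$ neither $x_n$ nor $x_n^{-1}$ can be absorbed by another syllable of the same vertex group --- does not follow from reducedness of $x_1\cdots x_n$ alone. In the course of reducing $x_1\cdots x_n h x_n^{-1}\cdots x_1^{-1}$, pairs $x_j, x_j^{-1}$ with $j<n$ genuinely do cancel: this happens precisely when $x_j$ commutes with the \emph{element} $x_{j+1}\cdots x_n h x_n^{-1}\cdots x_{j+1}^{-1}$, which it can do without commuting with each of its syllables. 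After such a cancellation, syllables that were previously separated become adjacent, and one must still rule out that $x_n$ or $x_n^{-1}$ then merges with another $v_n$-syllable in the new configuration. The claim you need is true, but proving it requires either a strengthened induction tracking the exact shape of the reduced form of $x_k\cdots x_n h x_n^{-1}\cdots x_k^{-1}$ for all $k$, or an appeal to a structural theorem; as written, the crux of the lemma is asserted rather than proved.

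If you want to close the gap without redoing Green's shuffle calculus, here is a short route. Suppose $ghg^{-1}\in G_S$. Applying the retraction $r_S$ and using $r_S(h)=h$ gives $ghg^{-1}=r_S(g)\,h\,r_S(g)^{-1}$, so $c:=r_S(g)^{-1}g$ centralizes $h$. By the description of centralizers of vertex elements in graph products (see \cite{Green}), the centralizer of a nontrivial $h\in G_s$ equals $C_{G_s}(h)\times G_{\lk(s)}\subseteq G_{\{s\}\cup\lk(s)}$. Hence $g=r_S(g)\,c$ lies in $G_{S\cup\lk(s)}$, so by the Normal Form Theorem the support of $g$ is contained in $S\cup\lk(s)$, a set which does not contain $v_n$ (since $v_n\notin S$ and $\{v_n,s\}\notin E$). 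This contradicts the fact that $v_n$ lies in the support of $g$, which is independent of the chosen reduced expression. Note that this version needs no induction on syllable length at all: it shows directly that every $g\in\Nor_{G_\Gamma}(G_S)$ has support in $S\cup\lk(S)$.
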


\section{Actions on cube complexes}
A detailed description of \CAT\ cube complexes can be found in \cite{Haefliger} and in \cite{Sageev}. Let $\mathcal{C}$ denote the class of finite dimensional \CAT\ cube complexes and let $\mathcal{A}$ denote the subclass of $\mathcal{C}$ consisting of simplicial trees.
Inspired by Serre's fixed point property $\F\mathcal{A}$, Bass introduced the property $\F\mathcal{A'}$ in \cite{Bass}. 
A group $G$ has property $\F\mathcal{A'}$ if every simplicial action of $G$ on every member $T$ of $\mathcal{A}$ is locally elliptic, 
i.e. if each $g\in G$ fixes some point on the tree $T$. 
A generalization of property $\F\mathcal{A'}$ was defined in \cite{Leder}. A group $G$ has property $\F\mathcal{C'}$ if every
simplicial action of $G$ on every member $X$ of $\mathcal{C}$ is locally elliptic, i.e. if each $g\in G$ fixes some point on 
$X$. Bass proved in \cite{Bass} that every profinite group has property $\F\mathcal{A'}$. His result was generalized by 
Alperin to compact groups in \cite{Alperin} and to almost connected locally compact groups in \cite{Alperin2}.

The next result was proved by Caprace in \cite[Theorem 2.5]{Caprace}.
\begin{proposition}
\label{compact}
Let $X$ be a locally Euclidean \CAT\ cell complex with finitely many isometry types of cells, and $L$ be a compact group acting 
as an abstract group on $X$ by cellular isometries. Then every element of $L$ is elliptic. 
In particular, every compact group has property $\F\mathcal{C}'$.  
\end{proposition}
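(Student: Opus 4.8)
The plan is to reduce the assertion to the non-existence of hyperbolic elements in $\varphi(L)$ and then to exploit, on the one hand, completeness of the \CAT\ metric on $X$ and, on the other, the finiteness of the isometry types of its cells.

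First I would collect the geometric input. Because $X$ is locally Euclidean with only finitely many isometry types of cells, it is a complete \CAT\ space and every cellular isometry is semisimple; moreover there is a constant $\epsilon>0$, depending only on $X$, such that every hyperbolic cellular isometry $g$ has translation length $\ell(g)=\inf_{x\in X}d(x,gx)\geq\epsilon$ (Bridson, see \cite{Haefliger}). Thus each cellular isometry is either elliptic, in which case it fixes a point, or hyperbolic, in which case it has an axis and its powers satisfy $\ell(g^{n})=|n|\,\ell(g)$. By the Bruhat--Tits fixed point theorem a group acting on a complete \CAT\ space with a single bounded orbit fixes a point, so ellipticity is the same as having a bounded cyclic orbit; in either formulation it suffices to show that no $\varphi(g)$, for $g\in L$, is hyperbolic.

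For the connected directions this is quick. A compact connected group is divisible \cite{HoMo}, so if $g$ lies in the identity component of $L$ then for every $n$ there is $h\in L$ with $h^{n}=g$. If $\varphi(g)$ were hyperbolic, then $\varphi(h)$ could not be elliptic, for otherwise its power $\varphi(g)$ would be; hence $\varphi(h)$ is hyperbolic with $\ell(\varphi(h))=\ell(\varphi(g))/n$, and choosing $n>\ell(\varphi(g))/\epsilon$ violates the lower bound $\epsilon$. Therefore every element of the identity component acts elliptically.

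The main obstacle is a general element, where two difficulties meet: the homomorphism $\varphi$ is only abstract, so the topology of $L$ cannot be transported to $X$, and for the totally disconnected directions the divisibility trick fails, since a topological generator of a procyclic group such as $\hat{\mathbb Z}$ admits no roots whatsoever. To overcome this I would transport the geometry back to the domain by fixing a base vertex $x_{0}$ and studying the function $\nu(h)=d(x_{0},\varphi(h)x_{0})$, which is symmetric and subadditive and satisfies $\ell(\varphi(h))=\lim_{n}\nu(h^{n})/n$; the gap $\epsilon$ makes $\ell$ a discrete invariant. I would then run Dudley's norm technique \cite{Dudley} on the locally compact group $L$, using van Dantzig's theorem to reduce the totally disconnected case to profinite groups and a Baire category argument on $L$ (which, being locally compact, is a Baire space) to force $\nu$ to be bounded on $\overline{\langle g\rangle}$, whence $\ell(\varphi(g))=0$ and $\varphi(g)$ is elliptic. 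This Baire-category step, controlling an abstract and possibly non-measurable homomorphism, is where I expect the real work to lie. Once every $\varphi(g)$ is elliptic, the final assertion follows because the members of $\mathcal{C}$ are exactly such complexes $X$, so every compact group has property $\F\mathcal{C}'$.
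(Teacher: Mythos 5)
First, a point of reference: the paper does not prove Proposition~\ref{compact} at all --- it quotes it from Caprace--Marquis \cite[Theorem 2.5]{Caprace} --- so your attempt must be measured against that proof. Your preparatory material is sound and matches it: completeness of $X$, Bridson's semisimplicity of cellular isometries, the uniform gap $\epsilon>0$ below the translation lengths of hyperbolic cellular isometries (this gap is established inside the proof of \cite[Theorem 2.5]{Caprace} from the finiteness of the shapes; it is not quite an off-the-shelf citation to \cite{Haefliger}), and the root-extraction argument that handles divisible subgroups --- in particular $L^\circ$, since compact connected groups are divisible --- which is precisely Claim~7 of the cited proof, i.e. Lemma~\ref{divisible} of the paper.

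The genuine gap is the case you yourself flag as open: an element $g$ that survives in the profinite quotient, e.g. $L=\mathbb{Z}_p$ or $\hat{\mathbb{Z}}$ with $g$ a topological generator. Here you offer a plan, not an argument (``Dudley's norm technique \dots\ Baire category \dots\ where I expect the real work to lie''), and the plan is unworkable in principle: Steinhaus--Pettis/Baire-category arguments require the sublevel sets $\{h\in L \mid \nu(h)\le n\}$ to have the Baire property, and for a purely abstract homomorphism they are arbitrary subsets of $L$; one of countably many sets covering the Baire space $\overline{\langle g\rangle}$ must be non-meager, but without the Baire property that yields nothing. Evading exactly this obstruction is the point of the proposition, and the actual proof is purely algebraic. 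The missing idea is a \emph{commuting decomposition with partial divisibility}: $H=\overline{\langle g\rangle}$ is compact abelian, $H^\circ$ is divisible, and $H/H^\circ$ is procyclic, hence isomorphic to $\prod_p C_p$ with $C_p$ pro-$p$. Fixing one prime $q$ and taking preimages in $H$ of the factors $C_q$ and $\prod_{p\neq q}C_p$, one writes $g=ab$ with $a,b\in H$ commuting, where $a$ admits $n$-th roots for every $n$ prime to $q$ and $b$ admits $q^k$-th roots for every $k$ (raising to such powers is surjective on each of these closed subgroups, because each is an extension of a pro-$q$ group, resp.\ of a profinite group with trivial $q$-part, by the divisible group $H^\circ$). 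Your $\epsilon$-gap argument then shows $\varphi(a)$ and $\varphi(b)$ cannot be hyperbolic, hence are elliptic; and commuting elliptic isometries of a complete \CAT\ space have a common fixed point, since $\varphi(b)$ has bounded orbits on the complete convex set $\mathrm{Fix}(\varphi(a))$ and Bruhat--Tits applies there. Thus $\varphi(g)$ is elliptic, with no continuity, measurability, or category hypothesis anywhere --- note also that your intermediate goal (boundedness of $\nu$ on all of $\overline{\langle g\rangle}$) is stronger than what is needed and is not obviously true, whereas the element-wise statement is all the proposition asserts.
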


We recall that a group $G$ is called \emph{divisible} if $\left\{g^n\mid g\in G\right\}=G$ holds 
for all integers $n\geq1$. Another result which we will need in order to prove Proposition A is the following.
\begin{lemma}\cite[Theorem 2.5 Claim 7]{Caprace}
\label{divisible}
Every divisible group has property $\F\mathcal{C}'$. 
\end{lemma}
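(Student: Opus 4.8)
The plan is to fix a divisible group $G$ acting simplicially on a finite dimensional \CAT\ cube complex $X$ and to show that every $g\in G$ is elliptic. The key input is the structure theory of combinatorial isometries of finite dimensional \CAT\ cube complexes due to Haglund: every simplicial isometry $g$ of $X$ is semisimple, and its stable combinatorial translation length $\ell(g)=\lim_{k\to\infty} d(x,g^k x)/k$, measured in the edge-path metric, is a well-defined non-negative integer that does not depend on the base vertex $x$. I would isolate the two features of $\ell$ on which the proof rests: it is homogeneous under powers, $\ell(g^n)=n\,\ell(g)$ for $n\geq1$, and $g$ fixes a point of $X$ if and only if $\ell(g)=0$. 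Semisimplicity is precisely the statement that there are no parabolic isometries, so that a non-elliptic $g$ satisfies $\ell(g)\geq 1$; this is the point at which the finite dimensionality of $X$ enters.

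With this in hand, divisibility closes the argument at once. Suppose some $g\in G$ is not elliptic, so that $\ell(g)\geq1$, and choose an integer $n>\ell(g)$. Because $G$ is divisible there exists $h\in G$ with $h^n=g$, and then $\ell(g)=\ell(h^n)=n\,\ell(h)$. As $\ell(h)$ is a non-negative integer and $\ell(g)\geq1$, we get $\ell(h)\geq1$, whence $\ell(g)=n\,\ell(h)\geq n>\ell(g)$, which is absurd. Therefore every element of $G$ is elliptic, i.e. $G$ has property $\F\mathcal{C}'$.

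The substantive content sits in the first step rather than the second. The elementary divisibility computation only uses that a hyperbolic isometry has translation length bounded away from zero and that this length scales linearly under powers; the real theorem is that these combinatorial translation lengths are integral (equivalently, uniformly bounded below) and that no parabolic behaviour can occur, both of which fail without the hypothesis of finite dimension. Thus the one place I expect genuine difficulty, and the point at which I would invoke Haglund's semisimplicity theorem, is establishing that $\ell$ takes integer values and vanishes exactly on the elliptic isometries; once that is granted, the passage to $n$-th roots supplied by divisibility forces arbitrarily large translation length and delivers the contradiction immediately.
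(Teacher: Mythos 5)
Your argument is correct, and it proves the lemma. Note first that the paper itself offers no proof: Lemma~\ref{divisible} is quoted from Caprace--Marquis \cite[Theorem 2.5, Claim 7]{Caprace}, so the right comparison is with their argument. Their proof has exactly the same skeleton as yours --- semisimplicity, translation lengths bounded away from zero, and the scaling identity $\ell(h^n)=n\ell(h)$ colliding with divisibility --- but it runs on different machinery: they use the \CAT\ metric and Bridson's theorem that cellular isometries of a locally Euclidean \CAT\ cell complex with finitely many isometry types of cells are semisimple, with the set of translation lengths bounded away from $0$; you use the combinatorial (edge-path) metric and Haglund's theorem, with integrality of $\ell$ playing the role of the lower bound. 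Each choice buys something: the Caprace--Marquis version applies to general locally Euclidean cell complexes (which is what they, and Proposition~\ref{compact} of this paper, need), whereas your version is specific to cube complexes but is in fact stronger than you claim. Your closing remark --- that semisimplicity and integrality ``fail without the hypothesis of finite dimension'' --- is the one inaccuracy: Haglund's combinatorial semisimplicity theorem holds for \emph{arbitrary} \CAT\ cube complexes, with no dimension bound; it is semisimplicity for the \CAT\ \emph{metric} that can fail in infinite dimensions. This does not affect your proof, since the class $\mathcal{C}$ in the definition of property $\F\mathcal{C}'$ consists of finite dimensional complexes anyway, but it means your argument actually establishes local ellipticity of divisible groups on all \CAT\ cube complexes, not just finite dimensional ones.
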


The following result is due to Sageev and follows from the proof of Theorem 5.1 in \cite{Sageev}, see also \cite[Theorem A]{Leder}.
\begin{proposition}
\label{globalfixedpoint}
Let $G$ be a finitely generated group acting by simplicial isometries on 
a finite dimensional \CAT\ cube complex. If the $G$-action is locally elliptic, then $G$ has a global fixed point. 
\end{proposition}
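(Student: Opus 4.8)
The plan is to reduce the problem to a statement about finitely many elliptic isometries and then to exploit the median geometry of the $1$-skeleton. Since $G$ is finitely generated, I would fix generators $g_1,\dots,g_n$; a point is fixed by all of $G$ precisely when it is fixed by each $g_i$, so it suffices to produce a common fixed point of $g_1,\dots,g_n$. After passing to the cubical subdivision of $X$ (which is again a finite dimensional \CAT\ cube complex and on which every cellular automorphism acts without inversions) I may assume that the fixed-point set $\mathrm{Fix}(g)$ of each elliptic element $g$ is a nonempty convex subcomplex.

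The first key ingredient I would use is the Helly property for median graphs: the $1$-skeleton of a \CAT\ cube complex is a median graph, and in a median graph any finite family of pairwise intersecting convex subsets has a common point. Applied to the convex subcomplexes $\mathrm{Fix}(g_1),\dots,\mathrm{Fix}(g_n)$, this reduces the theorem to showing that the intersections $\mathrm{Fix}(g_i)\cap\mathrm{Fix}(g_j)$ are all nonempty, equivalently that each two-generated subgroup $\langle g_i,g_j\rangle$ has a fixed point. Note that $\langle g_i,g_j\rangle$ is again finitely generated and its action is locally elliptic, so nothing is lost in isolating this two-generator case.

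To settle the two-generator case I would argue by induction on the dimension $d$ of $X$, the case $d=0$ being trivial since a connected $0$-dimensional \CAT\ cube complex is a point. For the inductive step, suppose $a,b$ are elliptic with $\mathrm{Fix}(a)$ and $\mathrm{Fix}(b)$ disjoint. Two disjoint convex subcomplexes of a \CAT\ cube complex are separated by a hyperplane and joined by a combinatorial bridge, and by analysing the action of $a$ and $b$ on the hyperplanes crossing this bridge I would produce a word in $a$ and $b$ that translates a combinatorial geodesic, i.e. acts as a hyperbolic, non-elliptic isometry. This contradicts local ellipticity, so $\mathrm{Fix}(a)\cap\mathrm{Fix}(b)\neq\emptyset$, and the induction (together with the Helly step above) closes the argument.

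The main obstacle is exactly this last step: upgrading ``disjoint fixed-point sets'' to ``the product is hyperbolic''. In a tree this is Serre's classical computation, where the translation length of $ab$ equals twice the distance between the two fixed trees; the real content here is to carry this out combinatorially in higher dimension. This is where finite dimensionality is indispensable, since it guarantees, via the semisimplicity of cellular isometries of finite dimensional \CAT\ cube complexes, that a non-elliptic isometry genuinely admits a combinatorial axis and is therefore hyperbolic. In infinite dimension this mechanism breaks down, which is why the hypothesis of finite dimensionality cannot be dropped.
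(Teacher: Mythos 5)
Your strategy breaks at the very first reduction, and the examples below show that this cannot be repaired. The pairwise Helly property you invoke is a theorem about \emph{median-convex} (equivalently, combinatorially convex) vertex sets, but fixed-point sets of simplicial isometries are only \CAT -convex: in general they are not subcomplexes and their vertex sets are not median-convex, even after cubical subdivision. For the reflection $a(x,y)=(y,x)$ of the standard cubulation of $\mathbb{R}^2$, the fixed set is the diagonal line, which is not a subcomplex of any iterated subdivision, and its vertex set $\{(n,n)\}$ is not median-convex: the median of $(0,0)$, $(1,1)$, $(1,0)$ is $(1,0)$. For merely \CAT -convex sets, pairwise intersection no longer forces a common point in dimension $\geq 2$, and fixed sets do realize this failure: take the three simplicial reflections of cubulated $\mathbb{R}^2$ in the lines $y=x$, $y=-x+2$ and $y=0$. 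Their fixed sets intersect pairwise (in $(1,1)$, $(0,0)$ and $(2,0)$ respectively), each pair of these reflections generates a finite dihedral group with a global fixed point, and yet the three have no common fixed point; indeed the product of the three reflections is a glide reflection. This does not contradict the proposition, because that group is not locally elliptic --- but your argument only ever uses ellipticity of words of length at most $2$ in the generators, so it would ``prove'' a global fixed point here. Any correct proof must exploit local ellipticity of arbitrarily long words, which the Helly-plus-pairs scheme structurally cannot do.

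The second gap is the one you flag yourself: the two-generator step is announced (``I would produce a word\dots'') but not carried out, and it is precisely where the content lies. Worse, the tree heuristic guiding it is false in dimension $\geq 2$: for the rotations $a,b$ by $90$ degrees of cubulated $\mathbb{R}^2$ about $(0,0)$ and $(1,1)$, the fixed sets are disjoint yet $ab$ is elliptic (it is the point reflection in $(0,1)$); only a better chosen word, here $ab^{-1}$, is a translation. So there is no analogue of Serre's formula for $ab$, the announced induction on dimension is never actually used, and producing a hyperbolic element requires the halfspace/skewering analysis that constitutes Sageev's actual argument. Note that the paper does not reprove this proposition: it cites the proof of Theorem 5.1 in Sageev's paper and Theorem A of Leder--Varghese. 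That argument is structurally different from yours: one shows directly that a finitely generated group acting on a finite dimensional \CAT\ cube complex either has bounded orbits --- in which case the Bruhat--Tits circumcenter gives a fixed point --- or contains an element skewering a halfspace, hence a hyperbolic element, contradicting local ellipticity. No reduction to pairs of generators occurs, and the three-reflections example explains why it cannot.
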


The last fact we need for the proof of Proposition A concerning global fixed points is the following 
easy consequence of the Bruhat-Tits Fixed Point Theorem \cite[Lemma 2.1]{Marquis}.
\begin{lemma}
\label{boundedproduct}
Suppose that a group $H$ acts isometrically on a complete \CAT\ space.
If $H=H_1H_2\cdots H_r$ is a product 
of finitely many subgroups $H_j$ each fixing some point in $X$, then $H$ has a global fixed point.
\end{lemma}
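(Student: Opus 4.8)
The plan is to derive the statement from the Bruhat--Tits Fixed Point Theorem, which we may invoke here: an isometric action of a group on a complete \CAT\ space that has a bounded orbit admits a global fixed point, namely the circumcentre of the closure of the orbit. Hence the whole problem reduces to producing a single bounded $H$-orbit; once this is done, the circumcentre is canonically attached to the orbit and is therefore fixed by all of $H$.

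To build such an orbit, I would first choose for each $j$ a point $x_j\in X$ fixed by $H_j$, and then track the orbit of the single point $x_1$. The key observation is that each factor $H_j$ moves points away from $x_1$ by a bounded amount: for $h_j\in H_j$ and any $y\in X$, using that $h_j$ is an isometry fixing $x_j$ we get $d(h_jy,x_j)=d(y,x_j)$, whence by the triangle inequality
\[
d(h_jy,x_1)\le d(h_jy,x_j)+d(x_j,x_1)=d(y,x_j)+d(x_j,x_1)\le d(y,x_1)+2\,d(x_1,x_j).
\]
Writing an arbitrary $h\in H$ as $h=h_1h_2\cdots h_r$ with $h_j\in H_j$ and applying the factors to $x_1$ from right to left, one iterates the displayed inequality to obtain $d(h\cdot x_1,x_1)\le 2\sum_{j=1}^r d(x_1,x_j)=:C$, a constant independent of $h$. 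Thus $H\cdot x_1$ lies in the closed ball of radius $C$ about $x_1$, so it is bounded, and Bruhat--Tits finishes the argument.

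The step I would watch most carefully is the reduction itself rather than any individual estimate. One is tempted to argue by induction on $r$, but the partial products $H_1\cdots H_{r-1}$ need not be subgroups, so there is no smaller group to which an inductive hypothesis could apply; tracking one orbit directly sidesteps this. Beyond that, the only hypotheses that genuinely do the work are completeness and the \CAT\ condition on $X$, which are precisely what the Bruhat--Tits theorem needs in order to produce a unique circumcentre, while the finiteness of $r$ enters only to keep the constant $C$ finite.
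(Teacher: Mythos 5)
Your proof is correct and follows exactly the route the paper intends: the paper proves this lemma by appeal to the Bruhat--Tits Fixed Point Theorem (via \cite[Lemma 2.1]{Marquis}), and your argument supplies precisely the missing ``easy consequence'' --- the iterated triangle-inequality estimate showing the orbit $H\cdot x_1$ is bounded, after which the circumcentre gives the fixed point. No gaps; the estimate and the iteration are both valid.
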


\subsection{Graph products, cube complexes and the building}
Associated to finite dimensional graph products are certain finite dimensional \CAT\ cube complexes. We
briefly describe the construction of these spaces. For a graph product $G_\Gamma$ we consider the poset
\[
P=\left\{gG_T\mid g\in G_\Gamma\text{ and } T \text{ is a clique}\right\},
\]
ordered by inclusion (we recall that we allow empty cliques).
The group $G_\Gamma$ acts by left multiplication on this poset and hence simplicially on the flag complex 
$X_\Gamma$ associated to this coset poset.
This flag complex has a canonical cubical structure. With respect to this structure $X_\Gamma$ is the Davis realization of 
a right-angled building, \cite[Theorem 5.1]{Davis}. By \cite[Theorem 11.1]{Davis} the Davis realization of every building is a complete \CAT\ space. 
Hence $X_\Gamma$ is a finite dimensional \CAT\ cube complex, and $G_\Gamma$ acts isometrically on $X_\Gamma$.
The \emph{chambers} of $X_\Gamma$ correspond to the cosets of the trivial subgroup, i.e. to the elements of $G_\Gamma$.
The $G_\Gamma$-stabilizer of a chamber (a maximal cube) is therefore trivial.
The \emph{vertices} of $X_\Gamma$ correspond to the cosets of the $G_S$, where $S\subseteq V$ is an inclusion-maximal clique.
The action of $G_\Gamma$ on $X_\Gamma$ preserves the canonical cubical structure. 

One nice property of this action is the following: if a subgroup $H\subseteq G_\Gamma$ has a global fixed point in $X_\Gamma$, 
then there exists a vertex in $X_\Gamma$ which is fixed by $H$. 
This follows from the fact that the action is type preserving. Furthermore, the stabilizer of a vertex 
$gG_T$ is equal to $gG_Tg^{-1}$.
\begin{lemma}
\label{globalFP}
Let $G_\Gamma$ be a finite dimensional graph product and let $H$ be a subgroup. 
If the action of $H$ on the building $X_\Gamma$ is locally elliptic, then $H$ has a global fixed point.
\end{lemma}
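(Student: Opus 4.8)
The plan is to reduce the statement to the finitely generated case supplied by Proposition~\ref{globalfixedpoint} and then to assemble the resulting fixed-point sets into a single common fixed point, using the completeness and the finite-dimensionality of the building $X_\Gamma$.

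First I would treat finitely generated subgroups. If $H'\subseteq H$ is finitely generated, the restricted action is again locally elliptic, so Proposition~\ref{globalfixedpoint} yields a global fixed point; hence the fixed-point set $\mathrm{Fix}(H')$ is a nonempty, closed, convex subcomplex of $X_\Gamma$. Since $\mathrm{Fix}(\langle H',H''\rangle)=\mathrm{Fix}(H')\cap\mathrm{Fix}(H'')$, the family $\mathcal F=\{\mathrm{Fix}(H')\mid H'\subseteq H\text{ finitely generated}\}$ is downward directed and has the finite intersection property, and its total intersection is exactly $\mathrm{Fix}(H)$. Thus it suffices to prove $\bigcap\mathcal F\neq\emptyset$; once a global fixed point is produced, the type-preserving property recorded before the statement upgrades it to a fixed vertex.

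The passage from the finite intersection property to a genuine common point I would organize as an induction on $n=\dim X_\Gamma$ (the base case $n=0$ being a single vertex). Because $X_\Gamma$ is a complete \CAT\ space, if some member of $\mathcal F$ is bounded then all sufficiently small members lie in a common bounded set, and the circumcenters of this directed family of nonempty bounded closed convex sets form a Cauchy net converging into every member; this is precisely the Bruhat--Tits mechanism underlying Lemma~\ref{boundedproduct}, and it places a point in $\bigcap\mathcal F$. To organise the unbounded alternative I would first pass to a minimal stratum: choosing $H_0$ finitely generated with $d:=\dim\mathrm{Fix}(H_0)$ minimal and writing $Y:=\mathrm{Fix}(H_0)$, every $\mathrm{Fix}(H')$ with $H'\supseteq H_0$ is a nonempty $d$-dimensional convex subcomplex of $Y$, so the remaining difficulty is confined to a single dimension.

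The main obstacle is exactly this unbounded case, in which all the subcomplexes escape to the boundary and $H$ fixes only a point $\xi\in\partial_\infty X_\Gamma$ at infinity while $\bigcap\mathcal F=\emptyset$; ruling this out is the real content, and it is where finite-dimensionality is indispensable. Here I would argue that a locally elliptic action on a finite-dimensional \CAT\ cube complex cannot fix a boundary point without already fixing a point of the complex: the direction $\xi$ selects a descending chain of halfspaces, i.e. a combinatorial ray, and passing to the associated strictly lower-dimensional subcomplex carries a locally elliptic $H$-action to which the inductive hypothesis applies; the induction terminates because $\dim X_\Gamma$ is finite. Combining the bounded alternative with this boundary reduction gives $\bigcap\mathcal F=\mathrm{Fix}(H)\neq\emptyset$, as required. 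I expect this exclusion of a fixed point at infinity, rather than the formal reduction to the finitely generated case, to be the crux of the argument.
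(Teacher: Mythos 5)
Your first step---reducing to finitely generated subgroups via Proposition~\ref{globalfixedpoint} and organizing the fixed-point sets $\mathrm{Fix}(H')$ into a directed family of nonempty closed convex subcomplexes---is sound, and it parallels the opening of the paper's proof. The gap is exactly where you suspect it to be, in the unbounded case, but it is worse than a missing argument: no argument can close it at the level of generality you are working in. Your proposal uses only that $X_\Gamma$ is a complete, finite-dimensional \CAT\ cube complex, and the statement ``locally elliptic implies global fixed point'' is \emph{false} for arbitrary (non-finitely-generated) groups acting on such complexes. Concretely, let $G_1\subsetneq G_2\subsetneq\cdots$ be a strictly ascending chain of finite groups and let $G=\bigcup_n G_n$ act on the Bass--Serre tree of the ray-of-groups decomposition with vertex groups $G_n$ and edge groups $G_n$ (included into $G_{n+1}$). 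Every element of $G$ lies in some $G_n$ and hence is elliptic, each $\mathrm{Fix}(G_n)$ is a nonempty closed convex subtree, the family is nested, yet the intersection is empty: a global fixed point would force $G$ inside a conjugate of some finite $G_n$. The fixed sets escape to a fixed end at infinity, which is precisely the scenario your halfspace/dimension induction would have to exclude---and this example (a complete $1$-dimensional \CAT\ cube complex) shows it cannot be excluded. The sketched reduction ``pass to the associated strictly lower-dimensional subcomplex'' has no content in dimension $1$, and no property of general cube complexes rules out the fixed point at infinity.

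What makes the lemma true is the specific algebraic structure of the building $X_\Gamma$, which your proposal never invokes. In the paper's proof, a fixed vertex of $\langle X\rangle$ (for $X\subseteq H$ finite) shows that the parabolic closure is of the form $\Pc(X)=gG_{S_X}g^{-1}$ with $S_X$ a clique; finite dimensionality bounds $\lvert S_X\rvert$, so one can choose a finite $Z\subseteq H$ with $S_Z$ of maximal size. Then for any $h\in H$, setting $X=Z\cup\{h\}$, the inclusion $\Pc(X)\supseteq\Pc(Z)$ together with \ref{ParabolicSubgroups}(1) gives $S_X\supseteq S_Z$, maximality gives $S_X=S_Z$, and the conjugation rigidity of parabolics \ref{ParabolicSubgroups}(2) (a parabolic cannot properly contain a conjugate of itself) upgrades this to $\Pc(X)=\Pc(Z)$. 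Hence $H\subseteq\Pc(Z)=bG_{S_Z}b^{-1}$, a vertex stabilizer. Note that this rigidity is exactly what fails in the tree counterexample above, where vertex stabilizers $gG_ng^{-1}$ do properly contain conjugates of smaller vertex stabilizers; so any correct proof must, as the paper's does, use that the stabilizers in $X_\Gamma$ are parabolic subgroups of a graph product rather than arbitrary subgroups.
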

\begin{proof}
For each finite subset $X\subseteq H$, the finitely generated group $\langle X\rangle$ acts locally elliptically on $X_\Gamma$.
Thus $\langle X\rangle$ has by Proposition \ref{globalfixedpoint} a fixed vertex $gG_S$, for some $g\in G_\Gamma$ and 
some maximal clique $S$. It follows that the parabolic closure of $X$ is of the form 
$\Pc(X)=gG_{S_X}g^{-1}$, where $S_X$ is a clique depending uniquely on $X$.
Since there is an upper bound on the size of cliques in $\Gamma$,
there exists a finite set $Z\subseteq H$ such that $S_Z$ is maximal
among all cliques $S_X$, for $X\subseteq H$ finite.
We claim that $H\subseteq\Pc(Z)$.

Let $h\in H$ and put $X=Z\cup\{h\}$. If we put 
$\Pc(X)=aG_{S_X}a^{-1}$ and $\Pc(Z)=bG_{S_Z}b^{-1}$, then 
\[
 aG_{S_X}a^{-1}\supseteq bG_{S_Z}b^{-1}
\]
because $X\supseteq Z$. Then $S_X\supseteq S_Z$ holds by \ref{ParabolicSubgroups}(1).
From the maximality of $S_Z$ we conclude that $S_Z=S_X$. Then 
$aG_{S_X}a^{-1}=bG_{S_Z}b^{-1}$ by \ref{ParabolicSubgroups}(2).
It follows that $H\subseteq \Pc(Z)=bG_{S_Z}b^{-1}$,
and thus $H$ has a global fixed point.
\end{proof}

\section{The proofs of Proposition A and Theorem B}

\begin{NewPropositionA}
Let $\varphi:L\to G_\Gamma$ be an abstract homomorphism from an almost connected locally compact group $L$ to a finite dimensional graph product $G_\Gamma$. Then $\varphi(L)$ lies in a complete parabolic subgroup of $G_\Gamma$.
\end{NewPropositionA}
\begin{proof}
The group $L$ acts via
\[
L\to G_\Gamma\to\Isom(X_\Gamma)
\]
isometrically and simplicially on the right-angled building $X_\Gamma$.

Suppose first that $L$ is compact. Then the $L$-action is by Proposition \ref{compact} locally elliptic. 
Hence there is global fixed point by Lemma~\ref{globalFP}.

Suppose next that $L$ is connected. By Iwasawa's decomposition \cite[Theorem 13]{Iwasawa} we have
\[
L=H_1H_2\cdots H_r K,
\]
where $K$ is a connected compact group and $H_i\cong\mathbb{R}$ for $i=1, \ldots, r$. 
Each group $H_j$ has a fixed point by Lemma \ref{divisible},
and $K$ has a fixed point by the result in the previous paragraph.
Hence $L$ has a fixed point by Lemma~\ref{boundedproduct}.

Now we consider the general case.
If $L$ is almost connected, then the identity component $L^\circ$ has a global fixed point
 by the previous paragraph. The fixed point set $Z\subseteq X_\Gamma$ of $L^\circ$ is a convex
 \CAT\ cube complex, because the $L$-action is simplicial and type-preserving.
 By Proposition~\ref{compact}, the action of $L/L^\circ$ on $Z$ is locally
 elliptic. Hence the action of $L$ on $X$ is locally elliptic as well. Another application
 of Lemma ~\ref{globalFP} shows that $L$ has a global fixed point.
\end{proof}

Now we may prove Theorem B.

\begin{NewTheoremB}
Let $\varphi$ be an abstract homomorphism from a locally compact group $L$ to a finite dimensional graph product $G_\Gamma$. 
Then either $\varphi$ is continuous, or $\varphi(L)$ lies in a 
conjugate of a parabolic subgroup $G_{S\cup\lk(S)}$, where $S\neq\emptyset$ is a clique.
If every composite $L\xrightarrow{\varphi}G_\Gamma\xrightarrow{r_v} G_v$ is continuous, then $\varphi$ is 
continuous.
\end{NewTheoremB}
\begin{proof}
Let $L^\circ$ be the connected component of the identity in $L$. We distinguish two cases.

\smallskip\noindent\emph{Case 1: $\varphi(L^\circ)$ is not trivial.}

By Proposition A we know that $\varphi(L^\circ)\subseteq gG_Tg^{-1}$ where $T\subseteq V$ is a clique
and $g\in G_\Gamma$.
Hence ${\Pc}(\varphi(L^\circ))=hG_{S}h^{-1}$, where $\emptyset\neq S\subseteq T$ and $h\in G_\Gamma$.
Since $\varphi(L)$ normalizes $\varphi(L^\circ)$, we have 
by Lemma~\ref{normalizer} that $\varphi(L)\subseteq\Nor_{G_\Gamma}(\Pc(\varphi(L^\circ)))$.
This normalizer is of the form $h G_{S\cup\lk(S)} h^{-1}$, for some $h\in G_\Gamma$.
We note that in Case 1, the homomorphism $\varphi$ is not continuous, since the image of a connected group under continuous map is always connected and a connected subgroup of a discrete group is trivial. 

\smallskip\noindent\emph{Case 2: $\varphi(L^\circ)$ is trivial.}

Then $\varphi$ factors through an abstract homomorphism
$\psi:L/L^\circ\to G_\Gamma$, and $L/L^\circ$ is a totally disconnected locally compact group.
By van Dantzig's Theorem \cite[III\S 4, No. 6]{Bourbaki} there exists a compact open subgroup $K$ in $L/L^\circ$. 

\smallskip\noindent\emph{Subcase 2a: There is a compact open subgroup $K\subseteq L/L^\circ$ such that $\psi(K)$ is trivial.}
Then the kernel of $\psi$ is open in $L/L^\circ$ and hence $\psi$ and $\varphi$ are continuous. 

\smallskip\noindent\emph{Subcase 2b: There is no compact open subgroup $K\subseteq L/L^\circ$ such that $\psi(K)$ is trivial.}

Let $\mathcal K$ denote the collection of all compact open subgroups of $L/L^\circ$.
We note that $L$ acts on $\mathcal K$ by conjugation.
For $K\in\mathcal K$ we put $\Pc(\psi(K))=gG_{S_K}g^{-1}$.
Thus $S_K$ is a clique in $\Gamma$ which depends uniquely on $K$. 
We choose $M\in\mathcal K$ in such a way that $S_M$ is minimal and we note that $S_M\neq\emptyset$.
Given $a\in L/L^\circ$ we have 
$M\cap aMa^{-1}\in\mathcal K$ and
\[\Pc(\psi(aMa^{-1}))=\psi(a)\Pc(\psi(M))\psi(a)^{-1}.\]
From \ref{ParabolicSubgroups}(1) and
\[
\Pc(\psi(M))\supseteq\Pc(\psi(M)\cap\psi(aMa^{-1}))\subseteq\Pc(\psi(aMa^{-1}))
\]
we obtain that
\[
S_M\supseteq S_{M\cap aMa^{-1}}\subseteq S_{aMa^{-1}}.
\]
Since both $S_{aMa^{-1}}$ and $S_M$ are minimal we conclude that \[S_{M}=S_{M\cap aMa^{-1}}=S_{aMa^{-1}}\]
and that
\[\Pc(\psi(M))=\Pc(\psi(aMa^{-1}))=\psi(a)\Pc(\psi(M))\psi(a)^{-1}.\] 
Therefore $\psi(a)$ normalizes $\Pc(\psi(M))$, whence
\[
\varphi(L)= \psi(L/L^\circ)\subseteq hG_{S_M\cup\lk(S_M)}h^{-1},
\]
for some $h\in G_\Gamma$ by Lemma~\ref{normalizer}.

Suppose now towards a contradiction that $\varphi$ is not continuous, but that each composite $L\xrightarrow{\varphi}G_\Gamma\xrightarrow{r_v} G_v$ is continuous.
Then $\varphi(L)\subseteq gG_{S\cup\lk(S)}g^{-1}$ for some nonempty clique $S$.
There is a direct product decomposition
\[
 G_{S\cup\lk(S)}=G_S\times G_{\lk(S)}=\prod_{v\in S}G_v\times G_{\lk(S)}
\]
and therefore $\varphi$ factors as a product of commuting homomorphisms
\[
 \varphi(a)=g\prod_{v\in S}\varphi_v(a)\varphi_{\lk(S)}(a)g^{-1},
\]
with $\varphi_v=\varphi\circ r_v$ and $\varphi_{\lk(S)}=\varphi\circ r_{\lk(S)}$.
Here we use the retractions $r$ introduced in Section~\ref{GraphProductsSection}.
Since the $\varphi_v$ are all continuous, $\varphi_{\lk(S)}$ is not continuous.
Hence we find a clique $T\subseteq\lk(S)$ such that 
$\varphi_{\lk(S)}(L)\subseteq hG_{T\cup(\lk(T)\cap\lk(S))}h^{-1}$.
But then $S\cup T$ is a clique (because $T\subseteq\lk(S)$) which is strictly bigger than $S$.
If we continue in this fashion, we end up after finitely many steps with an empty link,
because $\Gamma$ has finite dimension. Thus $\varphi$ is a finite product of 
commuting continuous homomorphisms, and therefore itself continuous. This is a contradiction.
\end{proof}
The referee has pointed out that if every composite $L\xrightarrow{\varphi}G_\Gamma\xrightarrow{r_v} G_v$ is continuous, then $\varphi$ is 
continuous, whether or not the graph product if finite dimensional (see proof of Theorem 3.3 in \cite{Conner}).

\section{The proof of Proposition C}
We consider abstract homomorphisms from locally compact groups $L$ into groups $G$ which are in the class $\G$.
We recall from the introduction that for such a group $G$, every torsion subgroup $T\subseteq G$ is finite, and every
abelian subgroup $A\subseteq G$ is a (possibly infinite) direct sum of cyclic groups.
In particular, such a group $G$ has no nontrivial divisible abelian subgroups.
\begin{NewPropositionC}
Let $\varphi$ be an abstract homomorphism from a locally compact group $L$ to a group $G$ in the class $\mathcal{G}$. Then $\varphi$ factors through the canonical projection $\pi:L\to L/L^\circ$. If $L$ is almost connected, 
then $\varphi(L)$ is finite.  
\end{NewPropositionC}
\begin{proof}
We first show that every homomorphism $\rho:K\to G$ has finite image if $K$ is compact.
Suppose that $g\in K$. We claim that $\rho(g)$ has finite order.
The subgroup $H=\overline{\langle g\rangle}$ is compact abelian,
whence $\rho(H)=F\times \mathbb Z^{(J)}$, where $F$ is a 
finite abelian group. By Dudley's result
\cite{Dudley}, a compact group has no nontrivial free abelian quotients. Therefore 
$\rho(H)$ is finite and in particular, $\rho(g)$ has finite order. 
Since $G$ contains no infinite torsion groups, $\rho(K)$ is finite.

Now we show that $\varphi(L^\circ)$ is trivial.
By Iwasawa's Theorem \cite[Theorem 13]{Iwasawa} there is a decomposition $L^\circ=H_1\cdots H_r K$,
where $H_j\cong\mathbb R$ for $j=1,\ldots,r$ and where $K$ is a compact connected group.
The groups $H_1,\ldots,H_r$ are abelian and divisible. From our assumptions on the class $\G$
we see that the abelian groups $\varphi(H_j)$ are trivial, for $j=1,\ldots,r$.
The compact group $K$ is connected and therefore divisible \cite[Theorem 9.35]{HoMo}.
A finite divisible group is trivial, and therefore $\varphi(K)$ is trivial as well. 
This shows that $\varphi(L^\circ)$ is trivial.

The first paragraph of the present proof shows then that $\varphi(L)$
is finite if $L/L^\circ$ is compact.
\end{proof}

\section{The proof of Theorem D}
We are now ready to prove Theorem D.

\begin{NewTheoremD}
Let $\varphi$ be an abstract homomorphism from a locally compact group $L$ to a group $G$ in the class $\mathcal{G}$. 
Then either $\varphi$ is continuous, or $\varphi(L)$ lies in the normalizer of a finite non-trivial subgroup of $G$.
\end{NewTheoremD}
\begin{proof}
Let $L^\circ$ be the connected component of the identity in $L$. By Proposition C the homomorphism
$\varphi$ factors through a homomorphism $\psi:L/L^\circ\to G$.
The totally disconnected locally compact group $L/L^\circ$ contains by van Dantzig's Theorem 
\cite[III\S 4, No. 6]{Bourbaki} compact open subgroups. We distinguish two cases.

\smallskip\noindent\emph{Case 1: $\psi(K)$ is trivial for some compact open subgroup $K\subseteq L/L^\circ$.}

Then the kernel of $\psi$ is open and therefore $\psi$ and $\varphi$ are continuous.

\smallskip\noindent\emph{Case 2: $\psi(K)$ is nontrivial for every compact open subgroup $K\subseteq L/L^\circ$.}

By Proposition C, the image $\psi(K)$ of such a group $K$ is finite. 
Among the compact open subgroups of $L/L^\circ$ we choose $M$ such that $\psi(M)$ is minimal.
Given $g\in L/L^\circ$, we have then that $\psi(gMg^{-1})=\psi(M\cap gMg^{-1})=\psi(M)$. It follows that 
$\psi(g)$ normalizes $\psi(M)$.
\end{proof}

\section{Some remarks on the class $\G$}

In this last section we show that the class $\mathcal{G}$ contains many groups.

\begin{proposition}\label{LargeClass}
The class $\G$ is closed under passage to subgroups,
under passage to finite products, and under passage to arbitrary coproducts.
\end{proposition}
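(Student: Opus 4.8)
The plan is to treat the three closure properties separately, in increasing order of difficulty. Closure under subgroups is immediate: if $G\in\G$ and $H\subseteq G$, then every torsion subgroup of $H$ is a torsion subgroup of $G$ and hence finite, and every abelian subgroup of $H$ is an abelian subgroup of $G$ and hence a direct sum of cyclic groups. So $H\in\G$.

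For finite products it suffices, by induction, to show that $G_1\times G_2\in\G$ whenever $G_1,G_2\in\G$. I would argue through the two coordinate projections $p_i\colon G_1\times G_2\to G_i$. If $T\subseteq G_1\times G_2$ is a torsion subgroup, then each $p_i(T)$ is a torsion subgroup of $G_i$, hence finite, and $T\subseteq p_1(T)\times p_2(T)$ is therefore finite. If $A\subseteq G_1\times G_2$ is abelian, then each $p_i(A)$ is an abelian subgroup of $G_i$, hence a direct sum of cyclic groups; consequently $A$ embeds into $p_1(A)\times p_2(A)$, which is again a direct sum of cyclic groups. The key input here is Kulikov's theorem, to the effect that a subgroup of a direct sum of cyclic groups is again a direct sum of cyclic groups; applied to $A\subseteq p_1(A)\times p_2(A)$ it yields property (ii).

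For arbitrary coproducts, that is, free products $G=\ast_{i\in I}G_i$ with all $G_i\in\G$, the main tool is the Kurosh subgroup theorem, which expresses any subgroup $H\subseteq G$ as a free product $H=F\ast\bigl(\ast_\lambda t_\lambda H_\lambda t_\lambda^{-1}\bigr)$ with $F$ free and each $H_\lambda$ a subgroup of some factor $G_{i_\lambda}$. For property (i), if $H$ is torsion then $F$ must be trivial (free groups are torsion free) and at most one conjugate factor can occur, since a free product of two nontrivial groups always contains an element of infinite order; thus $H$ is conjugate to a torsion subgroup of a single $G_{i_\lambda}$, which is finite. For property (ii), if $H=A$ is abelian then its free product decomposition can have at most one nontrivial free factor, so either $A$ is free of rank at most one, hence cyclic, or $A$ is conjugate to an abelian subgroup of a single $G_{i_\lambda}$, hence a direct sum of cyclic groups. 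In all cases the defining properties of $\G$ are inherited, so $G\in\G$.

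The main obstacle is the abelian part in both the product and the coproduct cases: one must check that the property of being a direct sum of cyclic groups passes to the relevant subgroups. In the product case this is precisely the content of Kulikov's theorem, while in the coproduct case it is the reduction, via Kurosh, to a single factor, which guarantees that no genuinely new abelian groups arise beyond cyclic groups and abelian subgroups of the $G_i$ themselves.
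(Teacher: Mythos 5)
Your proof is correct and follows essentially the same route as the paper: subgroups are immediate, finite products are handled via the coordinate projections, and coproducts via the Kurosh subgroup theorem. You are in fact slightly more explicit than the paper in two spots---invoking Kulikov's theorem to see that a subgroup of a direct sum of cyclic groups is again one, and spelling out the torsion case for free products---both of which the paper's proof leaves implicit.
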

\begin{proof}
If $H\subseteq G\in\G$, then clearly $H\in\G$.
If $G_1,\ldots,G_r\in\G$ and if $T\subseteq \prod_{j=1}^rG_j$ is a torsion group,
then the projection $\pi_j(T)=T_j\subseteq G_j$ is also a torsion group.
Hence $T\subseteq\prod_{j=1}^rT_j$ is finite.
Similarly, if $A\subseteq \prod_{j=1}^rG_j$ is abelian, then 
$A$ is contained in the product $\prod_{j=1}^r\pi_j(A)$,
which is a direct sum of a finite abelian group and a free abelian group.
Hence $A$ itself is a direct sum of a finite abelian group and a free abelian group.
Finally suppose that $(G_j)_{j\in J}$ is a family of groups in $\G$.
By Kurosh's Subgroup Theorem \cite{Baer}, 
every subgroup of the coproduct $\coprod_{j\in  J}G_j$
is itself a coproduct $F*\coprod_{j\in J}g_jU_jg_j^{-1}$,
were $F$ is a free group, $U_j\subseteq G_j$ is a subgroup
and the $g_j$ are elements of $\coprod_{i\in  J}G_i$.
If such a group is abelian, then it is either cyclic or conjugate to a subgroup of
one of the free factors.
\end{proof}

\begin{proposition}
\label{Hyperbolic}
Every hyperbolic group $G$ is in the class $\mathcal{G}$.
\end{proposition}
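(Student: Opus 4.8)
The plan is to verify the two defining conditions of the class $\G$ for a hyperbolic group $G$, using the well-known structure theory of such groups. Recall the two properties that must hold: every torsion subgroup is finite, and every abelian subgroup is a direct sum of cyclic groups. For hyperbolic groups there is a clean dichotomy for infinite subgroups, and I would organize the proof around it. First I would recall the standard fact that in a hyperbolic group $G$, any element $g$ of infinite order is contained in a unique maximal virtually cyclic subgroup $E(g)$, and the centralizer of $g$ is virtually cyclic; moreover $G$ has only finitely many conjugacy classes of finite subgroups, so there is a uniform bound on the orders of finite subgroups of $G$.

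For condition (i), the key point is that a hyperbolic group cannot contain an infinite torsion subgroup. I would argue this via the bound on the orders of finite subgroups: since there is a global upper bound $N$ on the cardinality of any finite subgroup of $G$, any torsion subgroup $T$ is locally finite with all its finite subgroups bounded by $N$ in size, which forces $T$ itself to be finite (a locally finite group with a uniform bound on the sizes of its finite subgroups is finite, as any finite subset generates a finite subgroup of bounded order, and one takes a subgroup of maximal order). Alternatively, one invokes directly the standard theorem that every torsion subgroup of a hyperbolic group is finite.

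For condition (ii), I would show that every abelian subgroup $A$ of $G$ is either finite or infinite cyclic, and in both cases is trivially a direct sum of cyclic groups. If $A$ is a torsion group, then it is finite by condition (i), and a finite abelian group is a direct sum of cyclic groups. If $A$ contains an element $g$ of infinite order, then $A$ centralizes $g$, so $A$ is contained in the centralizer $C_G(g)$, which is virtually cyclic; a virtually cyclic abelian group is either finite or has a finite-index infinite cyclic subgroup, hence is itself finitely generated abelian, and as such is a direct sum of cyclic groups. Thus in every case $A$ satisfies the required form. In fact the stronger statement holds that every abelian subgroup of a hyperbolic group is either finite or virtually cyclic, but for membership in $\G$ it suffices to note it is a (finitely generated) direct sum of cyclic groups.

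The main obstacle is marshalling the correct structural input about hyperbolic groups without reproving it: specifically the facts that centralizers of infinite-order elements are virtually cyclic and that there is a uniform bound on finite-subgroup orders. These are classical results of Gromov and standard in the literature, so I would cite them rather than derive them, and the remainder of the argument is then a routine case split on whether the subgroup in question is torsion or contains an element of infinite order.
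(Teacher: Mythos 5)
Your proposal is correct in substance, and in the end it rests on the same standard facts that the paper's one-line proof cites: torsion subgroups of hyperbolic groups are finite (Gromov, via Ghys--de la Harpe, Chap.~8, Cor.~36) and abelian subgroups are finitely generated. Two differences are worth flagging. For condition (i), your primary argument is circular as written: you assert that a torsion subgroup $T$ of $G$ is locally finite, but local finiteness of $T$ is precisely the statement that every finitely generated torsion subgroup of $G$ is finite, which is the essential content of the theorem you are trying to avoid citing (for general groups, torsion does not imply locally finite --- Burnside groups and Tarski monsters are counterexamples). The uniform bound on orders of finite subgroups only buys you the passage from finitely generated torsion subgroups to arbitrary ones, which is the easy half; so your ``alternative'' of invoking the standard theorem directly is not an alternative but the actual proof, and it coincides with what the paper does. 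For condition (ii), your route is genuinely different from, and more self-contained than, the paper's: where the paper simply asserts that abelian subgroups of hyperbolic groups are finitely generated, you derive this by splitting into the torsion case (finite by (i)) and the case where $A$ contains an infinite-order element $g$, so that $A\subseteq C_G(g)$ is virtually cyclic, hence finitely generated abelian, hence a direct sum of cyclic groups. That derivation is correct and makes the mechanism visible; its only cost is that the centralizer theorem you cite is of essentially the same depth as the blanket statement the paper cites, so nothing is gained in logical economy, only in transparency.
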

\begin{proof}
By a theorem of Gromov \cite[Chap.8 Cor.36]{Gromov}, every torsion subgroup of a hyperbolic group is finite. 
Furthermore, every abelian subgroup of a hyperbolic group is finitely generated.
\end{proof}

\begin{proposition}
\label{Artin}
Let $A$ be an Artin group. If $A$ is a right-angled Artin group or an Artin group of finite type, 
then  $A$ is torsion free and every abelian subgroup of $A$ is finitely generated. 
\end{proposition}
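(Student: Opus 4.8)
The plan is to derive both properties from two structural features common to right-angled Artin groups and to Artin groups of finite type: a finite-dimensional classifying space, which yields torsion-freeness, and biautomaticity, which yields finite generation of abelian subgroups. Throughout I use that these groups are finitely generated, so in the right-angled case the defining graph is finite and all the relevant complexes are finite-dimensional. First I would show that $A$ is torsion free. A right-angled Artin group has the Salvetti complex as a finite-dimensional aspherical $K(A,1)$, and an Artin group of finite type has a finite-dimensional $K(A,1)$ by Deligne's theorem that the quotient of the complement of the complexified reflection arrangement is aspherical. In either case $A$ therefore has finite cohomological dimension. Since cohomological dimension does not increase under passage to subgroups, while a nontrivial finite cyclic group has infinite cohomological dimension, $A$ can contain no nontrivial element of finite order. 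For the right-angled case one may argue instead entirely within the framework of this paper: such a group is the graph product $G_\Gamma$ with every vertex group equal to $\mathbb{Z}$, a finite subgroup $F\subseteq G_\Gamma$ is compact, so by Proposition~\ref{compact} and Lemma~\ref{globalFP} it fixes a vertex $gG_T$ of the building $X_\Gamma$, and its stabilizer $gG_Tg^{-1}\cong G_T\cong\mathbb{Z}^{|T|}$ is torsion free, forcing $F=1$.

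Next I would prove that every abelian subgroup of $A$ is finitely generated, and here the key input is biautomaticity. Right-angled Artin groups are biautomatic (being cocompactly cubulated, or by the work of Hermiller and Meier), and Artin groups of finite type are biautomatic by a theorem of Charney. By the theorem of Gersten and Short on rational subgroups of biautomatic groups, every abelian subgroup of a biautomatic group is finitely generated, which settles both cases. For right-angled Artin groups there is also a purely geometric alternative: such a group acts properly and cocompactly by semisimple isometries on the \CAT\ cube complex covering its Salvetti complex, so the Solvable Subgroup Theorem of Bridson and Haefliger shows that every virtually solvable, in particular every abelian, subgroup is virtually finitely generated abelian, and hence finitely generated.

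The hard part is the finite type case, where the two geometric shortcuts available for right-angled Artin groups are unavailable. It is not known whether Artin groups of finite type are \CAT, so the Solvable Subgroup Theorem cannot be invoked, and finite generation of abelian subgroups must instead rest on the substantial inputs of Charney's biautomaticity theorem together with the Gersten--Short result; likewise torsion-freeness in this case depends on Deligne's $K(\pi,1)$ theorem rather than on any elementary consideration. Combining the two halves, every abelian subgroup of $A$ is finitely generated and, being torsion free, is free abelian of finite rank, which is exactly what is needed to place $A$ in the class $\G$.
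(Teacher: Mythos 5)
Your torsion-freeness argument is correct, though it takes a different route from the paper: the paper quotes Green for right-angled Artin groups and Brieskorn--Saito for finite type, while you deduce both from the existence of a finite-dimensional $K(A,1)$ (Salvetti complex, resp.\ Deligne's theorem) plus the fact that nontrivial finite cyclic groups have infinite cohomological dimension; your alternative argument for the right-angled case via Proposition~\ref{compact}, Lemma~\ref{globalFP} and the torsion-free vertex stabilizers $gG_Tg^{-1}\cong\mathbb{Z}^{|T|}$ is also valid and fits nicely into the paper's own machinery. Likewise, your \CAT\ cube complex argument for abelian subgroups of right-angled Artin groups (Solvable Subgroup Theorem) is essentially the paper's proof, which invokes Charney--Davis and \cite[II Corollary 7.6]{Haefliger}.

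The gap is in the finite type case, which you make rest entirely on the assertion that, by Gersten--Short, \emph{every} abelian subgroup of a biautomatic group is finitely generated. That is not what Gersten--Short prove. Their results are: rational subgroups of biautomatic groups are biautomatic, centralizers of finite subsets are rational (hence the center of a biautomatic group is finitely generated), translation numbers of infinite-order elements are positive and form a discrete set, and \emph{polycyclic} subgroups are virtually abelian --- in that last statement finite generation is part of the hypothesis, not the conclusion. Finite generation of an arbitrary abelian subgroup does not follow from these facts alone: discreteness of translation numbers does not exclude a subgroup like $\mathbb{Z}^{(\mathbb{N})}$, which carries a discrete norm (the $\ell^1$-norm), so one also needs a bound on the ranks of finitely generated subgroups, and biautomaticity by itself provides no such bound. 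This is precisely why the paper cites Bestvina's Corollary 4.2 rather than Charney plus Gersten--Short: Bestvina's point is to remove the finite-generation hypothesis, and his proof combines translation-length discreteness with the rank bound coming from finite cohomological dimension. Your argument could be repaired along the same lines with ingredients you already have on the table --- Deligne's finite-dimensional $K(A,1)$ bounds the rank of every finitely generated subgroup of an abelian subgroup $A$ by $\mathrm{cd}(A)$, and then discreteness of translation numbers rules out infinite divisibility, i.e.\ an infinite torsion quotient $A/\mathbb{Z}^n$ --- but as written the crucial step is carried by a citation of a theorem that does not exist in the cited form, so the finite type half of the proposition is not proved.
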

\begin{proof}
Every right-angled Artin group is torsion free by \cite[Corollary 3.28]{Green}.
Moreover $A$ is a \CAT\ group, see \cite{Charney4}. Hence every abelian subgroup of $A$ is finitely generated, 
see \cite[II Corollary 7.6]{Haefliger}. 
If $A$ is an Artin group of finite type, then $A$ is torsion free by \cite{Brieskorn}.
By \cite[Corollary 4.2]{Bestvina}, every abelian subgroup of $A$ is finitely generated.
\end{proof}
We note that it is an open question if every Artin group is torsion free \cite[Conjecture 12]{Charney}.

\begin{proposition}
\label{Coxeter}
Let $W$ be a Coxeter group. Then every torsion subgroup of $W$ is 
finite and every abelian subgroup of $W$ is finitely generated. 
\end{proposition}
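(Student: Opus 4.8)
The plan is to establish the two assertions separately, since they rely on different classical facts about Coxeter groups $W=(W,S)$. For the torsion statement, I would invoke the fundamental theorem of Coxeter group theory that every finite subgroup of $W$ is conjugate to a subgroup of a finite special (standard) parabolic subgroup $W_T$ for some $T\subseteq S$; this is the Bruhat--Tits fixed-point argument applied to the action of $W$ on its Davis complex, which is a complete \CAT\ space on which $W$ acts properly by isometries. A torsion subgroup $H\subseteq W$ acts by isometries, and I would argue that $H$ fixes a point: one way is to note that a torsion group acting on a \CAT\ space need not in general be bounded, so the cleanest route is to use that the $W$-action on the Davis complex is proper and that stabilizers of points are finite. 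Thus I would first show that every torsion subgroup has a global fixed point and then read off finiteness from properness of the action.

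Concretely, the first key step is to recall that $W$ acts properly and cocompactly by isometries on the Davis complex $\Sigma$, a complete \CAT\ cube-like complex, with finite cell stabilizers. The second step is to show that a torsion subgroup $H\subseteq W$ has a global fixed point in $\Sigma$. For this I would proceed as in the global-fixed-point results already available in the excerpt: a finite subgroup of $W$ fixes a point by the Bruhat--Tits Fixed Point Theorem (Lemma~\ref{boundedproduct} handles products of point-fixing subgroups, and a finite group has bounded orbits hence a circumcenter), and for an infinite torsion group I would exhibit it as a directed union of its finite subgroups and use that the nonempty fixed-point sets of the finite subgroups are nested closed convex subsets whose intersection is nonempty by completeness together with properness. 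The third step is then immediate: a group fixing a point lies in a point stabilizer, which is finite by properness, so $H$ is finite.

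For the abelian statement, I would use that $W$ is a \CAT\ group (it acts properly cocompactly by isometries on $\Sigma$) and apply the standard structure theory for abelian subgroups of \CAT\ groups: by the Flat Torus Theorem and its consequences, every abelian subgroup of a group acting properly and cocompactly by semisimple isometries on a complete \CAT\ space is finitely generated, exactly as cited for Artin groups in Proposition~\ref{Artin} via \cite[II Corollary 7.6]{Haefliger}. Since $W$ acts properly on the complete \CAT\ space $\Sigma$ with finitely many isometry types of cells, this corollary applies verbatim, giving that every abelian subgroup of $W$ is finitely generated.

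The main obstacle I anticipate is the infinite torsion case: producing a global fixed point for an arbitrary (possibly non-finitely-generated) torsion subgroup requires more than the Bruhat--Tits argument, which only gives fixed points for subgroups with bounded orbits. The cleanest fix is to lean on properness of the $W$-action: properness forces every point stabilizer to be finite, so it suffices to show the torsion subgroup has \emph{some} bounded orbit, and then the circumcenter is fixed. Here I would use that each finite subgroup fixes a nonempty closed convex set $\mathrm{Fix}(F)\subseteq\Sigma$, that these form a filtered family under reverse inclusion as $F$ ranges over the finite subgroups of $H$, and that by properness these fixed-point sets are uniformly bounded (a point moved too far would have infinite stabilizer), so their intersection is a nonempty closed convex set fixed by all of $H$. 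Alternatively, one may simply cite Gromov-type or Davis-complex results directly; either way the subtlety is entirely in passing from finite to infinite torsion subgroups, and properness of the Davis-complex action is the tool that resolves it.
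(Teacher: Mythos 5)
Your abelian half is correct and coincides with the paper's argument: Moussong's theorem \cite[Theorem 14.1]{Moussong} makes $W$ a \CAT\ group, and \cite[II Corollary 7.6]{Haefliger} gives finite generation of abelian subgroups. The torsion half, however, has a genuine gap, and it sits exactly at the point you flagged as the main obstacle. You treat an infinite torsion subgroup $H\subseteq W$ as a directed union of its \emph{finite} subgroups. That is only valid when $H$ is locally finite, and torsion does not imply locally finite: by the negative solution of the Burnside problem (Golod--Shafarevich groups, Grigorchuk's group, free Burnside groups) there exist infinite \emph{finitely generated} torsion groups, and nothing in your argument excludes such a group from occurring as a subgroup of $W$. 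The Bruhat--Tits theorem only yields fixed points for subgroups with bounded orbits, and the Sageev-type result for locally elliptic actions (Proposition~\ref{globalfixedpoint}) requires a finite dimensional \CAT\ \emph{cube} complex, whereas the Davis complex of a general Coxeter group is a piecewise Euclidean complex built from Coxeter cells, not a cube complex. This is precisely where the paper injects a non-geometric ingredient: $W$ is a linear group \cite[Corollary 6.12.11]{Davis2}, and by Schur's theorem \cite{Schur} every finitely generated linear torsion group is finite; hence every torsion subgroup $T\subseteq W$ is locally finite, and the bound on the orders of finite subgroups coming from the \CAT\ structure \cite[II Corollary 2.8(b)]{Haefliger} then forces $T$ to be finite.

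There is also a second, independent error: the claim that properness makes the fixed-point sets of finite subgroups uniformly bounded (``a point moved too far would have infinite stabilizer''). Properness bounds point stabilizers, not diameters of fixed sets, and these sets are typically unbounded: for $W=D_\infty\times D_\infty$ acting on its Davis complex $\mathbb{R}^2$, a reflection in the first factor fixes an entire line. Since a nested family of unbounded closed convex sets can have empty intersection (e.g. the sets $[n,\infty)$ in $\mathbb{R}$), your intersection argument fails even for locally finite $H$. Note that once local finiteness is known, no fixed-point argument is needed at all: choose a finite subgroup $F\subseteq T$ of maximal order, which exists by the bound in \cite[II Corollary 2.8(b)]{Haefliger}; for any $g\in T$ the subgroup $\langle F,g\rangle$ is finite and contains $F$, hence equals $F$ by maximality, so $T=F$ is finite. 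Thus the irreducible content of the torsion statement is ruling out infinite finitely generated torsion subgroups, and for that the paper's linearity-plus-Schur argument (or some comparably deep theorem) is indispensable.
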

\begin{proof}
It was proved in \cite[Theorem 14.1]{Moussong} that Coxeter groups are \CAT\ groups. 
Hence every abelian subgroup of $W$ is finitely generated \cite[II Corollary 7.6]{Haefliger} and the order
of finite subgroups of $W$ is bounded \cite[II Corollary 2.8(b)]{Haefliger}. 
Let $T\subseteq W$ be a torsion group. Since $W$  is a linear group \cite[Corollary 6.12.11]{Davis2} 
and since every finitely generated linear torsion group is finite \cite[I]{Schur}, it follows that every
finitely generated subgroup of $T$ is finite. Since the order of finite subgroups of $T$ is bounded, $T$ is finite. 
\end{proof}

\begin{proposition}
\label{OutFn}
The groups $\GL_n(\mathbb{Z})$, the groups $\Out(F_n)$ of outer automorphisms of free groups and the 
mapping class groups $\Mod(S_g)$ of orientable surfaces of genus $g$ are in the class $\mathcal{G}$.
\end{proposition}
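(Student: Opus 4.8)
The plan is to verify the two defining conditions of the class $\G$ for each of the three families by reducing them to two standard structural facts about these groups: that each is \emph{virtually torsion-free}, and that each has the property that all of its solvable — in particular abelian — subgroups are \emph{virtually polycyclic}. I would isolate these two facts first as general reductions, and then cite the appropriate structure theorem in each of the three cases.

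First I would record the two elementary reductions. Suppose $G$ has a torsion-free subgroup of finite index. Passing to its normal core, we may assume there is a torsion-free normal subgroup $\Gamma$ of finite index in $G$. If $T\subseteq G$ is a torsion subgroup, then $T\cap\Gamma$ is a torsion subgroup of the torsion-free group $\Gamma$, hence trivial, so $T$ embeds into the finite quotient $G/\Gamma$ and is therefore finite. This yields condition (i). For condition (ii), recall that a finitely generated abelian group is a finite direct sum of cyclic groups by the structure theorem, and that an abelian group possessing a finitely generated (in particular polycyclic) subgroup of finite index is itself finitely generated. Hence if every abelian subgroup of $G$ is virtually polycyclic, then every abelian subgroup is a finite direct sum of cyclic groups, which is condition (ii). Thus it suffices to establish virtual torsion-freeness and the virtual polycyclicity of solvable subgroups for each of $\GL_n(\mathbb{Z})$, $\Out(F_n)$ and $\Mod(S_g)$.

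Next I would dispatch the three cases. For $\GL_n(\mathbb{Z})$, virtual torsion-freeness is Minkowski's lemma (the principal congruence subgroup $\Gamma(m)$ is torsion-free and of finite index for $m\geq 3$), and every solvable subgroup is polycyclic by a theorem of Mal'cev. For $\Mod(S_g)$, virtual torsion-freeness is due to Serre (the kernel of the action on $H_1(S_g;\mathbb{Z}/m)$ is torsion-free of finite index for $m\geq 3$), while Birman--Lubotzky--McCarthy proved that solvable subgroups of $\Mod(S_g)$ are virtually polycyclic, with free abelian subgroups of rank at most $3g-3$. For $\Out(F_n)$, a torsion-free finite-index subgroup is supplied by the kernel of the reduction $\Out(F_n)\to\GL_n(\mathbb{Z}/3)$, and the work of Bestvina--Feighn--Handel on the Tits alternative for $\Out(F_n)$ shows that its solvable subgroups are virtually polycyclic.

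The two reductions of the second paragraph are immediate, so the substance of the argument lies entirely in the cited structure theorems. I expect the main obstacle to be precisely that, unlike $\GL_n(\mathbb{Z})$, the groups $\Out(F_n)$ and $\Mod(S_g)$ are not (known to be) linear, so that Schur's theorem and Minkowski's order bound used in the Coxeter case (Proposition~\ref{Coxeter}) are unavailable; one must instead invoke the dedicated results of Serre, Birman--Lubotzky--McCarthy, and Bestvina--Feighn--Handel. Once these are in hand, no further computation is required, and the membership in $\G$ follows for all three families.
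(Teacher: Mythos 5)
Your proof is correct, and it takes a genuinely different route from the paper's. The paper handles the torsion condition through linearity: Schur's theorem (finitely generated linear torsion groups are finite) combined with a uniform bound on the orders of finite subgroups (Minkowski for $\GL_n(\Z)$, Hurwitz for $\Mod(S_g)$), and for $\Out(F_n)$ through the Baumslag--Taylor theorem that the kernel of $\Out(F_n)\to\GL_n(\Z)$ is torsion free together with the already-settled $\GL_n(\Z)$ case; the abelian condition is dispatched by citing Mal'cev, Bass--Lubotzky and Birman--Lubotzky--McCarthy directly. You instead set up two uniform reductions -- virtual torsion-freeness (after passing to a normal core) yields condition (i), and virtual polycyclicity of abelian subgroups yields condition (ii) -- and then feed in congruence-type theorems case by case. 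Your unreferenced claim that the kernel of $\Out(F_n)\to\GL_n(\Z/3\Z)$ is torsion free is true, and in fact follows by combining exactly the paper's two ingredients: a finite-order element of that kernel maps to a finite-order element of the congruence subgroup $\Gamma(3)\subseteq\GL_n(\Z)$, which is trivial by Minkowski, so the element lies in the IA-kernel, which is torsion free by Baumslag--Taylor. The notable payoff of your route concerns $\Mod(S_g)$: the paper leans on Nikolaev's claimed linearity of mapping class groups, a delicate citation (linearity of $\Mod(S_g)$ for $g\geq 3$ is generally regarded as an open problem), whereas Serre's classical theorem that the kernel of the action on $H_1(S_g;\Z/m\Z)$ is torsion free for $m\geq 3$ is uncontested, so your argument rests on firmer ground there. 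Conversely, the paper's direct citations for the abelian condition are one layer shorter than your detour through solvable subgroups; note that for condition (ii) you genuinely need the virtually-polycyclic statement for \emph{abelian} (not merely solvable) subgroups, including finite generation of the finite-index subgroup -- which your cited results of Mal'cev, Bestvina--Feighn--Handel and Birman--Lubotzky--McCarthy do provide.
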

\begin{proof}
Since $\GL_n(\mathbb{Z})$ is a linear group, it follows that every finitely generated torsion subgroup  is 
finite \cite[I]{Schur}. Since the order of finite subgroups in $\GL_n(\mathbb{Z})$ is bounded \cite{Minkowski},
we obtain that every torsion subgroup of $\GL_n(\mathbb{Z})$ is finite. 
It was proved in \cite{Malcev} that every abelian subgroup of $\GL_n(\mathbb{Z})$ is finitely generated. 
Hence $\GL_n(\mathbb{Z})$ is in the class $\mathcal{G}$.

The kernel of the map $\Out(F_n)\to\GL_n(\mathbb{Z})$ which is induced by the abelianization of $F_n$ is torsion free \cite{Baumslag}. Since every torsion subgroup of $\GL_n(\mathbb{Z})$ is finite, it follows that every torsion subgroup of $\Out(F_n)$ is finite. Every abelian subgroup of $\Out(F_n)$ is finitely generated, see \cite{Lubotzky}. Thus $\Out(F_n)$ is in the class $\mathcal{G}$.

It was proved in \cite[Theorem A]{Birman} that every abelian subgroup of $\Mod(S_g)$ is finitely generated. Further, 
it was proved in \cite[Theorem 1]{Nikolaev} that $\Mod(S_g)$ is a linear group. Hence every finitely generated torsion subgroup is finite \cite[I]{Schur}. We know by \cite{Hurwitz} that the order of finite subgroups in $\Mod(S_g)$ is bounded. 
Therefore every torsion subgroup of $\Mod(S_g)$ is finite. Thus $\Mod(S_g)$ is in the class $\mathcal{G}$.
\end{proof}

\end{document}